\documentclass[fullpage,11pt]{amsart}
\usepackage{amssymb,verbatim,graphicx}
\usepackage{tikz}
\usetikzlibrary{matrix, arrows}
\usepackage{graphics,epsfig,psfrag} 
\textwidth15.8 cm
\oddsidemargin.4cm  
\evensidemargin.4cm
\parskip .07in

\newtheorem{theorem}{Theorem}[section]
\newtheorem{proposition}[theorem]{Proposition}
\newtheorem{corollary}[theorem]{Corollary}
\newtheorem{lemma}[theorem]{Lemma}
\theoremstyle{definition}
\newtheorem{definition}[theorem]{Definition}
\theoremstyle{remark}
\newtheorem{remark}[theorem]{Remark}
\newtheorem{example}[theorem]{Example}

\newcommand{\cL}{\mathcal{L}}

\newcommand{\R}{\mathbb{R}}
\newcommand{\C}{\mathbb{C}}
\newcommand{\eps}{\epsilon}

\newcommand{\cO}{\mathcal{O}}
\newcommand{\cU}{\mathcal{U}}

\newcommand{\bsh}{\backslash}
\newcommand{\ra}{\rightarrow}

\newcommand{\pa}{\partial}
\begin{document}
\title[Complement of Normal Crossing Divisor]{On the complement of a positive normal crossing divisor with no triple intersection in a projective variety}
\author{Khoa Lu Nguyen}
\address{Department of Mathematics,
UC Berkeley, California 94720.
{\em E-mail address: nglukhoa@berkeley.edu}}
\thanks{This research was funded by NSF grants DMS-1344991 and DMS-1205349.}

\begin{abstract} We describe how the Weinstein structure of the complement is modified when a positive divisor degenerates to a normal crossing divisor with no triple intersection.
\end{abstract}

\maketitle

\section{Introduction}
Let $(X, J)$ be a complex manifold and $\cL \rightarrow X$ be a holomorphic line bundle. In complex geometry (\cite{GH}), $\cL$ is \textbf{positive} if it admits a metric $g$ such that the curvature $\omega \in H^{1,1}(X)$ (with respect to the natural unique Hermitian connection $\nabla_g$) is a Kahler form. When $X$ is a projective variety, positivity of $\cL$ is equivalent to \textbf{ampleness}. A line bundle $\cL$ is ample if some positive tensorial power $\cL^{\otimes n}$ is very ample, i.e. $\cL^{\otimes n}$ is generated by its holomorphic sections. 

Let $s \in H^0(X, \cL)$ be a holomorphic section of $\cL$. The zero set $D_s := s^{-1}(0)$ of $s$ is a divisor of complex codimension 1 in $X$. The complement $W_{s} := X \bsh D_s$ inherits a \textbf{Weinstein structure} $(\lambda_s, \phi_s)$ from $(\cL, g, s)$ (Definition \ref{wmanifold}):  
$$\phi_s := -\mbox{log}(\|s\|^2), \hspace{0.3 cm} \lambda_s = -d^{c}\phi_s = -J^*d\phi_s.$$

The Weinstein structure $(W_{s}, \lambda_s, \phi_s)$ satisfies $d\lambda_s = \omega.$ Notice that the 1-form $\lambda$ blows up at $D_s$ although its exterior derivative $\omega$ extends to $X$. In fact, since $J$ is integrable, this structure has a classical name in complex geometry world: Stein structure. It is important to stress that the Weinstein structure is an object in the symplectic world because the integrable complex structure is not part of the data.

The complement $W_s$ topologically depends on $s$. For those sections $s$ whose zero sets $D_s$ are smooth divisors, $W_s$ are all diffeomorphic. In fact, the Weinstein structures $(W_s, \lambda_s, \phi_s)$ are all Weinstein homotopic (Definition \ref{whomotopy}). 

\textbf{Question:} Suppose $\{s_{\epsilon}\}_{\epsilon \in \mathbb{R}}$ is a family of sections of $\cL$ such that $D_\epsilon := D_{s_{\epsilon}}$ is smooth except when $\epsilon = 0$, $D_0$ is singular. How do we describe the modification of the Weinstein structure of $W_0$ in term of $W_\epsilon?$

This paper studies the case $D_0$ is a \textbf{normal crossing divisor} with no triple intersection. By definition, a divisor is normal crossing if there exists local holomorphic coordinates $(z_1, ..., z_n)$ near each point $p \in D_0$ such that $D_0$ is given by the equation $z_1...z_k = 0$ for some $1 \leq k \leq n$. $D_0$ has no triple intersection if and only if $k \leq 2$. 

\begin{remark} In symplectic topology, there are parallel notions of symplectic divisor and symplectic normal crossing divisor (\cite{Don}, \cite{MTZ}). Although we make some use of the integrable complex structure in the proof of the main result, there should be similar results.
\end{remark} 

In the case $\cL$ is very ample, Bertini's theorem \cite{GH} guarantees the existence of $\{s_\epsilon\}$ near $s_0$. Without loss of generality, assume $s_\epsilon = s_0 + \epsilon h,$ where $h \in H^0(X, \cL).$ The singular set $S$ of $D_0$ is a smooth complex codimension 2 submanifold. Let $B$ be the base locus of the pencil generated by $\{s_\eps\}$ and denote $\bar{S} = S \bsh (S \cap B).$ The (imprecise) main result is the following (for precise statements, see Theorem \ref{1} and Theorem \ref{2}). 

\begin{theorem} \label{mainresult} Up to homotopy of Weinstein structures, there is a Weinstein (domain) embedding of $$i: (W_0, \lambda_0, \phi_0) \rightarrow (W_\epsilon, \lambda_\epsilon, \phi_\epsilon).$$ The complement $W_\epsilon \bsh i(W_0)$ is decomposed into elementary Weinstein cobordisms, whose critical points are corresponding to critical points of $\bar{S}$, with a shift of indices by 2. In the case the critical point in a cobordism has index $\mbox{dim}_{\mathbb{C}}X$, the (Lagrangian) unstable submanifold can be obtained from the fibration $\pi:= s_0/h:  X \bsh {h^{-1}(0)} \rightarrow \mathbb{C}$ and the (Lagrangian) unstable submanifold of the corresponding critical point in $\bar{S}$.            
\end{theorem}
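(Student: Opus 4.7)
The plan is to analyze the change of Weinstein structure locally near the singular set $\bar{S}$, glue the local models to $\phi_0$ via $J$-convex interpolations, and read off the cobordism decomposition from a Morse function on $\bar{S}$. The starting point is a local normal form at each $p \in \bar{S}$: since $p \notin B$ one has $h(p) \neq 0$, and in suitable holomorphic coordinates $(z_1, z_2, w_1, \ldots, w_{n-2})$ with a compatible trivialization of $\cL$ the sections become $s_0 = z_1 z_2$ and $s_\epsilon = z_1 z_2 + \epsilon \tilde h$ with $\tilde h(0) \neq 0$. A direct quadratic expansion shows that $\phi_\epsilon = -\log|s_\epsilon|^2$ has a nondegenerate critical point of Morse index $2$ at the origin, with real two-dimensional descending manifold (after a complex rescaling) given by the plane $\{z_2 = \overline{z_1}\}$. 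On this plane the fibration $\pi = s_0/h$ restricts to $z_1 \mapsto |z_1|^2$, exhibiting the descending disk as a Lefschetz thimble of $\pi$ over $\R_{\geq 0}$ emanating from the critical point, which lies on $\bar{S} \subset D_0$.

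Next I would construct the Weinstein embedding $i : (W_0, \lambda_0, \phi_0) \hookrightarrow (W_\epsilon, \lambda_\epsilon, \phi_\epsilon)$ by interpolating between the two $J$-convex potentials. Away from a tubular neighborhood of $\bar{S}$ the divisors $D_0$ and $D_\epsilon$ are isotopic through smooth divisors, giving an identification under which $\phi_\epsilon$ and the corresponding push-forward of $\phi_0$ differ by a $C^2$-small term, uniform on compacta as $\epsilon \to 0$. A cutoff in the convex cone of $J$-convex functions glues $\phi_0$ to $\phi_\epsilon$, producing $\widetilde\phi_\epsilon$ that agrees with $\phi_0$ on a large sublevel set and with $\phi_\epsilon$ near $D_\epsilon$; the sublevel set on which $\widetilde\phi_\epsilon \equiv \phi_0$ is, up to the identification, the image of $i$. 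A standard homotopy argument in the space of Weinstein structures then establishes $(W_\epsilon, \lambda_\epsilon, \phi_\epsilon) \sim (W_\epsilon, \widetilde\lambda_\epsilon, \widetilde\phi_\epsilon)$.

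The complement $W_\epsilon \bsh i(W_0)$ is a tubular neighborhood of $\bar{S}$ on which the local model applies fiberwise, yielding a Morse--Bott Weinstein cobordism with critical set $\bar{S}$ and normal Morse index $2$. Equipping $\bar{S}$ with its inherited Weinstein structure and Lyapunov function $f$, I would perturb $\widetilde\phi_\epsilon$ by the pullback of $f$ along a holomorphic retraction to $\bar{S}$; this breaks the Morse--Bott into ordinary Morse, and each index-$k$ critical point $q$ of $f$ becomes an index-$(k+2)$ critical point supporting an elementary Weinstein subcobordism. When $k = n-2$, so that $L_q \subset \bar{S}$ is the Lagrangian unstable manifold of $q$, the product structure of the local model identifies the unstable submanifold of the corresponding index-$n$ critical point in $W_\epsilon$ as the union of Lefschetz thimbles of $\pi$ over $\R_{\geq 0}$ based at the points of $L_q$; Lagrangianity is automatic from the index being maximal.

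The main obstacle will be the $J$-convex interpolation in the second step: one must interpolate through $J$-convex functions across an $\epsilon$-dependent neighborhood of $\bar{S}$ with uniform quantitative control of the Hermitian data, and simultaneously ensure that no spurious critical points are introduced outside the intended tube, so that the Morse--Bott cobordism description of step three is geometrically clean. Everything else reduces either to the explicit Morse-theoretic computation in the local model or to standard Weinstein-homotopy technology.
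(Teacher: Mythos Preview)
Your overall architecture---local normal form near $\bar S$, embedding via interpolation, index shift by $2$, thimbles as unstable disks---matches the paper's, but two steps diverge from it and one has a real gap.

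For the embedding and the critical-point bijection, the paper bypasses $J$-convex interpolation entirely: it fixes a sublevel $K\subset W_0$ of $\phi_0$ containing $\mbox{Crit}(\phi_0)$ and observes that $Z_\epsilon$ is $C^1$-close to $Z_0$ on $K$ for small $\epsilon$, which already gives the Weinstein-domain embedding. It then analyzes $\phi_\epsilon$ \emph{directly}: a contradiction argument using the blow-up of $\partial\phi_\epsilon/\partial z_i$ confines the remaining critical points to a neighborhood of $\bar S$ and away from $B$, and the Inverse Function Theorem applied to
\[
F(z_0,z_1,\mathbf z,\epsilon)=\Bigl(z_1-(z_0z_1+\epsilon)\tfrac{\partial g}{\partial z_0},\; z_0-(z_0z_1+\epsilon)\tfrac{\partial g}{\partial z_1},\; -\tfrac{\partial g}{\partial \mathbf z},\; \epsilon\Bigr)
\]
at $(p,0)$ yields a unique $p_\epsilon$ near each $p\in\mbox{Crit}(\bar S)$; the index shift comes from a direct Hessian computation. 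No Morse--Bott intermediate is built. In fact your Morse--Bott claim is inconsistent with your own interpolation: you set $\widetilde\phi_\epsilon=\phi_\epsilon$ near $D_\epsilon$, but $\phi_\epsilon$ already contains the metric term $g=-\log\|{}''1''\|^2$, whose restriction to $\bar S$ is (up to a constant) $\phi_\epsilon|_{\bar S}$ and is \emph{already} Morse there. So $\widetilde\phi_\epsilon$ is Morse, not Morse--Bott, and there is nothing to perturb by a separate Lyapunov $f$---the tangential Morse data is built in. Your Morse--Bott picture is valid for the bare $-\log|z_0z_1+\epsilon|^2$, but that is not the function you are actually working with.

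The more substantive gap is in the Lagrangian step. The assertion that ``the product structure of the local model identifies the unstable submanifold \dots\ as the union of Lefschetz thimbles of $\pi$'' elides the crucial point: the unstable manifold is computed for a \emph{Liouville} vector field, and for the given $(\lambda_\epsilon,\phi_\epsilon)$ there is no a priori reason it coincides with the symplectic-connection thimble $D_\epsilon(K)$. The paper resolves this by an explicit Weinstein homotopy supported near $D_{\epsilon_0}(K)$. Writing $\eta_\epsilon:=-d^c\log|\pi+\epsilon|^2$ with $\omega$-dual vector field $X_\epsilon$, it proves the key lemma that $X_\epsilon$ is tangent to $\pi^{-1}((0,\epsilon))$ and lies along the characteristic foliation of the fibration $\pi$. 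In Weinstein-neighborhood coordinates around the Lagrangian $D_{\epsilon_0}(K)$ one then sets $\xi_\epsilon=\eta_\epsilon-\sum v_i\,du_i$, so the new Liouville field $Y_\epsilon=Z_S+X_\epsilon+\sum v_i\,\partial_{v_i}$ has unstable manifold \emph{exactly} $D_\epsilon(K)$ by that lemma. The cutoff $\tilde\lambda_\epsilon=(1-\rho)\xi_\epsilon+\rho\lambda_\epsilon+H\,d\rho$, with $dH=\lambda_\epsilon-\xi_\epsilon$ and $H|_{\bar S}=0$, then requires the estimate that $H\,d\rho$ (hence $\tilde\lambda_\epsilon-\eta_\epsilon$) is bounded uniformly in $\epsilon_0$; this is where the quantitative control you flag as the ``main obstacle'' is actually used, and it lives in the Lagrangian step rather than in the embedding. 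Without an argument of this type the ``product structure'' heuristic does not pin down the unstable Lagrangian.
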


We depart to discuss an application of Theorem \ref{mainresult}. There is a large class of Weinstein manifolds which appear from algebraic geometry: \textbf{affine varieties}. Given an affine variety $W \subset \mathbb{C}^N$, $W$ inherits a Weinstein structure from the distance function $\phi(z) = |z|^2.$ Hironaka's theorem implies that $W$ can be compactified to a projective variety $X$ by adding a normal crossing divisor $D$ at infinity. Such compactification is generally very useful in computing symplectic invariants $W$, such as its \textbf{symplectic homology}. 

Symplectic homology, originally defined and extended in \cite{FH}, \cite{V1}, \cite{V2}, is Hamiltonian-Floer theory applied to the non-compact setting. In the case of Weinstein manifold $(W, \lambda, \phi)$, roughly speaking, symplectic homology is the homology of a chain complex generated by critical points of $\phi$ (which record the topology of $W$) and Reeb orbits of the contact manifold $(\pa W, \lambda|_{\pa W})$. By choosing appropriate boundary $\pa W$ of $W$, the compactification $X = W \cup D$ gives a  nice description of the Reeb dynamics $\pa W$ as follows. There exists an open subset $U \subset \pa W$ such that $\pa W \bsh U$ is a circle bundle over the complement of a neighborhood of the singular set of $S$ in $D$, whose circle fiber is a Reeb orbit of $\pa W$. The homology classes of the Reeb orbits in $U$ can be written as linear combinations of the fibers in $\pa W \bsh U$. The differential of the symplectic homology chain complex can then be computed via     
some energy or topological arguments or via reducing the solutions of Cauchy-Riemann equations to some holomorphic curve counts (\cite{D}, \cite{P}).

Back to our situation, the first statement in Theorem \ref{mainresult} implies that there is a Viterbo transfer chain map $$\Phi: SH_*(W_\eps) \rightarrow SH_*(W_0).$$ In our forthcoming paper (\cite{N}), following the Bourgeois-Ekholm-Eliashberg's approach (\cite{BEE}),  we show that the mapping cone of $\Phi$ is quasi-isomorphic to a chain complex $LH_*$ constructed from the Lagrangian unstable submanifolds. This chain complex is parallel to the Hochschild chain complex from the $A_\infty$-structure of these Lagrangian unstable submanifolds in the wrapped Fukaya category of $W_\eps$ (\cite{Ab}, \cite{AbSei}). Since the Lagrangian unstable submanifolds is obtained from the fibration $\pi$ and some Lagrangian balls in $S$, it is reasonable to expect that one may reduce computing $LH_*$ to analyzing the Lagrangian balls in $S$ and the fibration $\pi$. This may be useful because $S$ has complex codimension 2 in $X$ making it easier to work with.  

\textbf{Acknowledgement}: This paper is the first part of the author's Ph.D thesis. The author would like to thank his advisor Yakov Eliashberg for many enlightening discussions throughout the years. The author also thank Mohammed Abouzaid, Denis Auroux and Tobias Ekholm for very helpful conversations about symplectic homology. 

\section{Weinstein structure}

We recall from \cite{CE} some basic notions of Weinstein structure. 

\begin{definition} \label{wmanifold} A \textit{Weinstein structure} on an open manifold $W$ consists of $(W, \lambda, \phi)$, where

\begin{itemize}  \item $d\lambda = \omega$ is a symplectic form on $W$,
\item $\phi: W \rightarrow \mathbb{R}$ is an exhausting generalized Morse function,
\item the vector field $Z$ given by $i_Z\omega = \lambda$ is complete and gradient-like for $\phi$.
\end{itemize}

In the case $\phi$ has only finitely many critical points, $(W, \lambda, \phi)$ is said to be of \textit{finite type}.
\end{definition}

By an exhausting generalized Morse function, we mean it is a proper and bounded from below function. Moreover, its critical points are either nondegenerate or embryonic. The latter means that in a neighborhood of the critical point $p$ with coordinates $(x_1, ..., x_m)$, $\phi = \phi_0$ is in the birth-death family: $$\phi_{t}(x) = \phi_t(p) \pm tx_1 + x_1^3 - \sum_{i=2}^{k} x_i^2 + \sum_{j=k+1}^{m} x_j^2.$$ 
The vector $Z$ is gradient-like with respect $\phi$ if there is some Riemannian metric on $W$ and a positive function $\delta: W \ra \mathbb{R}_+$ such that $$d\phi(Z) \geq \delta(|Z|^2 + |d\phi|^2).$$

\begin{definition} A \textit{Weinstein cobordism} $(W, \lambda, \phi)$ is a compact manifold $W$ with $\pa W = \pa_{-} W \cup \pa_{+}W$, which are regular level sets of the minimal and maximal values of $\phi$, and such that $(\lambda, \phi)$ satisfies all the conditions in Definition \ref{wmanifold} except that $Z$ is inwardly and outwardly transversal to $\pa_{-}W$ and $\pa_{+}W$ respectively. 

If $\pa_{-} W = \varnothing$, then $(W, \lambda, \phi)$ is called a \textit{Weinstein domain}.
\end{definition}

Given a Weinstein domain $(W, \lambda, \phi),$ we can attach a positive half of the symplectization $(\mathbb{R}_+ \times \pa W, e^{r}\lambda|_{\pa W}, f(r))$ (with an appropriate strictly increasing function $f$) to obtain a Weinstein manifold. This is called the completion of $W$, also denoted by $W$. Any finite type Weinstein manifold is the completion of some Weinstein domain. 

Our main source of examples comes from affine varieties. 

\begin{example} If $W \subset \mathbb{C}^N$ is an affine variety, $W$ admits a Weinstein structure $\lambda = -d^{c}|z|^2$ and $\phi$ some $C^\infty$-small perturbation of $|z|^2$. 
\end{example}

\begin{example} Suppose $X$ is a projective variety with a positive line bundle $\cL.$ The complement $W_s$ of a normal crossing divisor $D_s = s^{-1}(0)$ of a holomorphic section $s$ of $\mathcal{L}$ admits  a Morse function (after some perturbation) $\phi_s(z) = -\mbox{log}\|s(z)\|^2.$ Define $\lambda_s = -d^{c}\phi_s = -J^*d\phi_s.$ Any sublevel set $(\{\phi_s \leq M\}, \lambda_s, \phi_s)$ with regular value $M$ is a Weinstein domain. The set of critical points of $\phi$ forms a compact subset in $W_s$ (\cite{Sei}). When $M$ is sufficiently large so that its complement in $W_s$ contains no critical point of $\phi_s$, denote by $W_s$ the Weinstein manifold obtained from completing $(\{\phi_s \leq M\}, \lambda_s, \phi_s)$. 
\end{example}

\begin{definition} \label{whomotopy} A \textit{Weinstein homotopy} of finite type Weinstein structures $\{(W, \lambda_{t}, \phi_t)\}_{t \in [0,1]}$ is a family such that $\cup_{t \in [0,1]}\mbox{Crit}(\phi_t)$ is compact in $W$ and the underlying $(W, \phi_{t})$ is a Morse homotopy.

\end{definition}

Recall that $(W, \phi_{t})$ (with $\cup_{t \in [0,1]}\mbox{Crit}(\phi_t)$ is compact in $W$) is a Morse homotopy if  there exists a finite set $A \subset (0,1)$ such that for each $t \in A$, the function $\phi_t$ has a unique birth-death type critical point such that it does not lie on any other critical levels. For $t \notin A$, $\phi_t$ is Morse.  

\section{The Geometry of the complement}
Recall the setting of our problem: there is a family of holomorphic sections $s_{\epsilon} = s_0 + \epsilon h$ such that the zero sets $D_\eps$ are smooth except at $\eps = 0$, where $D_0$ is a normal crossing divisor with no triple intersection. 

Denote by $B = \{s_0 = h = 0\}$ the base locus of this family. The restriction of $\phi_\eps$ to $\bar{S} = S \bsh (S \cap B)$, where $S \subset D_0$ is the singular set of $D_0$, is independent of $\eps$ up to constant addition. The differential $d\phi_\eps|_{\bar{S}}$ is thus independent of $\eps$ and we define $$\mbox{Crit}(\bar{S}) := \mbox{Crit}(\phi_\eps|_{\bar{S}}).$$    

Assume $\mbox{Crit}(\bar{S})$ is a discrete set.

\begin{theorem} \label{1}  Up to homotopy of Weinstein structures, there is a Weinstein embedding of $$i: (W_0, \lambda_0, \phi_0) \rightarrow (W_\epsilon, \lambda_\epsilon, \phi_\epsilon).$$ For $\eps > 0$ sufficiently small, there is a natural bijection between the set of critical points of $\phi_\eps$ in the complement $W_\epsilon \bsh i(W_0)$ and $\mbox{Crit}(\bar{S}).$ Moreover, the index of a critical point equals the index of the corresponding element in $\mbox{Crit}(\bar{S})$ plus 2.
\end{theorem}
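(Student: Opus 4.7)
The plan is to decompose $W_\eps$ into three regions whose geometry is controlled independently: a large sublevel set $K_M = \{\phi_0 \leq M\}$, chosen with $M$ so large that all critical points of $\phi_0$ lie in its interior (this $K_M$ sits inside $W_\eps$ for small $\eps$ since it is bounded away from $D_0$); a neighborhood $U$ of the smooth locus $D_0 \setminus S$; and a tubular neighborhood $N$ of $\bar S$. On $K_M$, the sections satisfy $s_\eps - s_0 = \eps h$ while $\|s_0\|$ is bounded below, so $\phi_\eps$ and $\lambda_\eps$ are $C^2$-close to $\phi_0, \lambda_0$, and the critical points of $\phi_\eps$ inside $K_M$ are in nondegenerate bijection with those of $\phi_0$. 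On $U$ away from $N$ one has $s_0 = z_1$ in local coordinates (after trivializing $\cL$), and the change of coordinate $\tilde z_1 := z_1 + \eps h$ (invertible for small $\eps$) identifies $\phi_\eps$ with a function of the same form as $\phi_0$ in a smooth way, so no new critical points appear there.

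For the embedding $i$, consider the convex combination
\[
(\lambda_t, \phi_t) \;=\; \bigl((1-t)\lambda_0 + t\lambda_\eps,\; (1-t)\phi_0 + t\phi_\eps\bigr), \qquad t \in [0,1],
\]
on $K_M$. Since $d^c$ is $\mathbb{R}$-linear, the relation $\lambda_t = -d^c\phi_t$ is preserved, and for $\eps$ small the Liouville field $Z_t$ stays gradient-like for $\phi_t$ throughout $[0,1]$. After a small smoothing of $\partial K_M$ into a regular level set of $\phi_\eps$, one obtains a Weinstein domain embedding $i \colon (K_M, \lambda_\eps, \phi_\eps) \hookrightarrow (W_\eps, \lambda_\eps, \phi_\eps)$ which is Weinstein-homotopic via $(\lambda_t, \phi_t)$ to $(K_M, \lambda_0, \phi_0)$; passing to completions yields the Weinstein embedding of $W_0$ into $W_\eps$ up to Weinstein homotopy.

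The heart of the argument lies on $N$. Fix $p \in \bar S$ and take local holomorphic coordinates $(z_1, z_2, w)$ in which $s_0 = z_1 z_2$ after a local trivialization of $\cL$, with $w$ parameterizing $\bar S$ and $(z_1, z_2)$ normal to $\bar S$; since $p \notin B$, $h(0,0,w) \neq 0$. Writing $\phi_\eps = -\log|z_1 z_2 + \eps h|^2 + K$ where $K$ is the smooth real potential of the metric on $\cL$, and rescaling $u_j = z_j/\sqrt{\eps}$, one obtains
\[
\phi_\eps \;=\; -2\log|\eps| \,-\, \log\bigl|u_1 u_2 + h(0,0,w)\bigr|^2 \,+\, K(0,0,w,\bar w) \,+\, O(\sqrt{\eps}).
\]
The leading part in the fiber $(u_1, u_2)$ is pluriharmonic with a unique critical point at $u=0$; a direct Hessian computation on the quadratic approximation $-2\,\mathrm{Re}(u_1 u_2/h(0,0,w))$ shows its signature is $(2,2)$, so the fiber Morse index is $2$. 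By the implicit function theorem, on compact subsets of $\bar S$ this yields a smooth critical section $\sigma_\eps \colon \bar S \to N$ converging to the zero section as $\eps \to 0$, and the restricted function $\phi_\eps \circ \sigma_\eps$ differs from $\phi_\eps|_{\bar S}$ by an $O(\sqrt{\eps})$ term; hence for $\eps$ small its critical points are in nondegenerate bijection with $\mathrm{Crit}(\bar S)$. A splitting of the Hessian along the normal-tangent decomposition then gives total Morse index $\mathrm{ind}_{\bar S} + 2$.

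The main obstacles are twofold. First, ruling out stray critical points of $\phi_\eps$ in the transition zones between $K_M$, $U$, and $N$ requires a uniform lower bound $|d\phi_\eps| \geq c > 0$ on these compact transition sets as $\eps \to 0$, which follows from continuity of $d\phi_\eps$ in $\eps$ and compactness but must be verified region-by-region. Second, the local normal-fiber analysis degenerates near $S \cap B$, where $h \to 0$ and the rescaling by $\sqrt{\eps}$ no longer dominates; this is precisely why the statement excludes $S \cap B$ by passing from $S$ to $\bar S$. On compact subsets of $\bar S$, $|h|$ has a uniform positive lower bound, and the implicit-function and Hessian estimates become uniform, yielding the stated bijection and index formula.
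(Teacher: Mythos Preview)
Your strategy is close to the paper's: confine the new critical points to a neighborhood of $S$, then analyze them locally. The paper carries out the local step a bit differently --- instead of your $\sqrt{\eps}$-rescaling and critical-section reduction, it applies the inverse function theorem directly to the map
\[
F(z_0, z_1, \mathbf{z}, \eps) \;=\; \Bigl(z_1 - (z_0 z_1 + \eps)\tfrac{\partial g}{\partial z_0},\; z_0 - (z_0 z_1 + \eps)\tfrac{\partial g}{\partial z_1},\; -\tfrac{\partial g}{\partial \mathbf{z}},\; \eps\Bigr)
\]
at $(p,0)$, and reads off the index shift from the observation that only the mixed partials $\partial^2\phi_\eps/\partial x_0\partial x_1$ and $\partial^2\phi_\eps/\partial y_0\partial y_1$ blow up (of order $\eps^{-1}$) as $\eps\to 0$, sending two Hessian eigenvalues to $+\infty$ and two to $-\infty$. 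Your rescaled model is a legitimate alternative and makes the fiber signature $(2,2)$ more transparent.

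There is, however, a genuine gap. You note that the normal-fiber analysis degenerates near $S\cap B$, but then say this is ``precisely why the statement excludes $S\cap B$ by passing to $\bar S$.'' That only tells you where the \emph{target} of the bijection lives; it does not show that $\phi_\eps$ has no critical points in $W_\eps$ accumulating at $S\cap B$. Your three-region cover $K_M \cup U \cup N$ does not see a punctured neighborhood of $S\cap B$ (since $U$ surrounds $D_0\setminus S$ and a tubular neighborhood of the noncompact $\bar S$ need not reach $S\cap B$), so the continuity-and-compactness bound you invoke for the transition zones does not apply there either. The paper closes this with a separate local computation: near a point of $S\cap B$ one takes coordinates with $s_\eps = z_0 z_1 + \eps z_2$ in a holomorphic trivialization, and the three equations $\partial\phi_\eps/\partial z_j = 0$ first force $\max(|z_0|,|z_1|) \leq C\eps|z_2|$ along any sequence of critical points, and then yield $|\partial g/\partial z_2| \gtrsim |z_2|^{-1}(1 + C^2\eps|z_2|)^{-1}$, contradicting smoothness of $g$ at the limit point. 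You need an argument of this kind to complete the bijection claim.
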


\begin{proof}
Let $K \subset W_0$ be a sublevel of $\phi_0$ such that $\mbox{Crit}(\phi_0) \subset K$. For sufficiently small $\eps$, the Weinstein vector field $Z_\eps$ is $C^{1}$-close to $Z_0$. Hence, by choosing a sublevel $K_\eps \subset W_\eps$ such that $K \subset K_\eps$, we obtain an embedding $i: (K, \lambda_0, \phi_0) \ra (K_\eps, \lambda_\eps, \phi_\eps),$ proving the first half of the theorem.

The critical points of $\phi_\eps$ in $K_\eps$ converge to critical points of $\phi_0$ as $\eps \ra 0$. Let's analyze other critical points of $\phi_\eps$. We now show that given any neighborhood $U$ of $S$, the other critical points of $\phi_\eps$ lie in $U$ for sufficiently small $\eps$. Assume the contrary that there is a sequence $\{p_{m} \in \mbox{Crit}(\phi_{\eps_m})\}$ such that as $\eps_m \ra 0$, $p_{\eps_m} \ra p \notin U.$ Then $p \in D_0 \bsh S$. There is a local holomorphic coordinates $(z_0, \mathbf{z})$ near $p$ and a local trivialization $''1''$ of $\cL$ such that $s_\eps = (z_0 + \eps h)''1''.$ Denote $g(z_0, \mathbf{z}) := -\mbox{log}\|''1''\|^2,$ we have at $p_{m}$, $$\frac{\partial  \phi_{\eps_m}}{\partial z_0}(p_{m}) = \frac{\partial g}{\partial z_0}(p_{m}) - \frac{1}{z_0 + \eps_m h(z)} = 0.$$ As $\eps_m \ra 0$, it contradicts finiteness of $\partial g/{\partial z_0}(p)$.   

Moreover, the other critical points of $\phi_\eps$ can not wander off to $B$. Indeed, assume there is a sequence $\{p_{m} \in \mbox{Crit}(\phi_{\eps_m})\}$ such that as $\eps_m \ra 0$, $p_{m} \ra p \in B.$ There is a local holomorphic coordinates $(z_0, z_1, z_2, \mathbf{z})$ and a local holomorphic trivialization $''1''$ of $\cL$ such that $s_\eps = (z_0z_1 + \eps z_2)''1''.$ Again, denote $g = -\mbox{log}\|''1''\|^2,$ we have at $p_m$
$$\frac{\partial \phi_{\eps_m}}{\partial z_0} = \frac{\partial g}{\partial z_0} -\frac {z_{1}}{z_{0}z_1 + \eps_m z_2} = 0,$$
$$\frac{\partial \phi_{\eps_m}}{\partial z_{1}} = \frac{\partial g}{\partial z_{1}} - \frac {z_{0}}{z_{0}z_1 + \eps_m z_2} = 0,$$
$$\frac{\partial \phi_{\eps_m}}{\partial z_{2}} = \frac{\partial g}{\partial z_{2}} - \frac {\eps_m}{z_{0}z_1 + \eps z_2} = 0. $$
 The first two equations imply that there exists $C > 0$ such that for sufficient small $\eps_m$, at $p_m$, $\mbox{max}(|z_0|, |z_1|) \leq C\eps_m|z_2|.$ Thus 
$$\left|\frac{\partial g}{\partial z_{2}}(p_m)\right| = \left|\frac {\eps_m}{z_{0}z_1 + \eps z_2} \right| \geq \frac{1}{|z_2|} . \frac{1}{1 + |C^2\eps_m z_2|}, $$ contradicting finiteness of $\partial g/{\partial z_2}(p)$.

Having checked that the other critical points of $\phi_\eps$ must stay near $\bar{S}$ and away from $B$, we proceed to prove the second half of the theorem. It is obvious that if a sequence $\{p_{m} \in \mbox{Crit}(\phi_{\eps_m})\}$ converges to $p \in \bar{S}$, then $p \in \mbox{Crit}(\bar{S})$. Conversely, we will show that for each $p \in \mbox{Crit}(\bar{S}),$ any sufficiently small neighborhood $U$ of $p$ satisfies $\phi_\eps|_{U}$ has a unique critical point for sufficiently small $\eps.$ Let $(z_0, z_1, \mathbf{z})$ be local holomorphic coordinates near $p$ and local holomorphic trivialization $ ''1''$ of $\cL$ such that $s_\eps = (z_0z_1 + \eps) ''1''$. Denote by $g = -\mbox{log}\| ''1''\|^2.$ Let's consider the function $$F(z_{0}, z_1,\mathbf{z}, \eps) = ( z_1 - (z_{0}z_1 + \eps)\frac{\partial g}{\partial z_{0}}, z_0 -(z_{0}z_1 + \eps) \frac{\partial g}{\partial z_1},  -\frac{\partial g}{\partial \mathbf{z}}, \eps).$$
Since the derivative
$$DF(p,0) = 
\begin{bmatrix}
 0 & 0 & 1 & 0 & * & 0\\
0 & 0 & 0 & 1 & * & 0 \\
1 &  0 & 0 & 0 & * & 0 \\
0 & 1 & 0 & 0 & * & 0 \\
0 & 0 &0 &0 &  \mbox{Hess}_{x,y}(-g)(p) & 0\\

* & * & * & * & 0
 & 1\\
\end{bmatrix}
$$
is nondegenerate, Inverse Function Theorem implies that $F$ is a local diffeomorphism. In particular, for each sufficiently small $\eps$, there exists a unique solution $(p_\eps, \eps)$ near $(p, 0)$ satisfying $F(p_\eps, \eps) = (0, 0, 0, \eps)$, i.e. $p_\eps$ is a critical point of $\phi_\eps$.  

Finally, we need to show that $\mbox{index}(p_\eps) = \mbox{index}(p) + 2$. Computing the second derivatives of $\phi_\eps$,
we obtain all second derivatives of $\phi_\eps$ at $p_\eps$ are bounded except for $$\frac{\partial^2 \phi_\eps}{\partial x_0\partial x_1} \hspace{0.2 cm} \mbox{and} \hspace{0.2 cm}  \frac{\partial^2 \phi_\eps}{\partial y_0\partial y_1},$$ which have order of $\eps^{-1}$. This implies that as $\eps \ra 0$, there are two eigenvalues of $\mbox{Hess}(\phi_\eps)(p_\eps)$ diverging to $+\infty$, two eigenvalues diverging to $-\infty$, and the other eigenvalues converging to the eigenvalues of $\mbox{Hess}(\phi_\eps|_{\bar{S}})(p)$ (which are independent of $\eps$).Therefore, $\mbox{index}(p_\eps)  = \mbox{index}(p).$
\end{proof}
 
\begin{example} \label{ex1}
Let $X = \C P^n$ with coordinates $[z_0 : z_1 : ... : z_n]$ and $[H]$ is the hyperplane class. Take $\cL = \cO (2H)$ and equip $\cL$ with the following metric: on the affine part $\{z_i \neq 0\}$, the norm of the section $z^2_i$ in $\cO (2H)$ is $|z_i|^2/|z|^2$. Consider $$s_{0} = z_0z_1 \hspace{0.2 cm} \mbox{and} \hspace{0.2 cm} h = a_2z_2^2 + ... + a_nz_n^2,$$ where $a_2, ..., a_n \in \mathbb{C}$ are generic. Then $W_\eps$ and $W_0$ are Weinstein equivalent to $T^*\R P^n$ and $\C^* \times \C ^{n-1}.$ To see the first, consider the hypersurface $F = \{s_\eps + z_{n+1}^2 = 0\} \subset \C P^{n+1}$. The Weinstein structure on $F \bsh \{z_{n+1} = 0\}$ can be identified with a Weinstein structure on $T^*{S}^{n}$, which is homotopic to the standard Weinstein structure on $T^*{S}^n$ (\cite{Sei1}). There is a free $\mathbb{Z}_2$-action on $F \bsh \{z_{n+1} = 0\}$: $$[z_0 : ... : z_n : z_{n+1}] \ra [z_0: ... : z_n: -z_{n+1}],$$ the quotient of which can be identified with $W_\eps$. The corresponding $\mathbb{Z}_2$-action on $T^*{S}^n$ gives the standard Weinstein structure on $T^*{\mathbb{R}P}^n.$

The critical point set of $\phi_0$ is $\{[z_0 : ... : z_n] | |z_0| = |z_1|\}.$ The critical point set of $\phi_\eps$ is the union of the critical point set of $\phi_0$ and $$\mbox{Crit}{\bar{S}} = \{[0 :  0 : 1 : ... : 0], ..., [0: 0: 0: ... : 1]\}.$$  
\end{example}

\begin{example} \label{ex2} Let $X = \C P^n \times \C P^n$ with first and second factors ' coordinates $[z_0 : z_1 : ... : z_n]$ and $[z'_0 : z'_1 : ... : z'_n]$ and $[H]$ and $[H']$ are the hyperplane classes of the first and second factors. Take $\cL = \cO (H) \otimes\cO (H')$ and equip it with a similar metric described in the previous example. Consider $$s_0 = z_0z'_0 \hspace{0.2 cm}  \mbox{and} \hspace{0.2 cm} h = a_1z_1z'_1 + ... + a_nz_nz'_n,$$ where $a_1, ..., a_n \in \mathbb{C}$ are generic. Then $W_\eps$ and $W_0$ are Weinstein equivalent to $T^*\C P^n$ and $\C^{2n}.$ To see the first, consider the hypersurface $F = \{s_\eps + z_{n+1}z'_{n+1} = 0\} \in \mathbb{C}P^{n+1} \times \mathbb{C}P^{n+1}.$ The Weinstein structure on $F \bsh \{z_{n+1}z'_{n+1} = 0\}$ can be identified with a Weinstein structure on $T^{*}S^{2n+1},$ which is homotopic to the standard Weinstein structure on $T^*S^{2n+1}$. There is a free $\C^*$-action on $F$: $$t.([z_0 : ..., z_n : z_{n+1}], [z'_0 : ... : z'_n : z'_{n+1}]) = ([tz_0 : ... : tz_n : z_{n+1}], [t^{-1}z'_0 :  ... : t^{-1}z'_n : z'_{n+1}]),$$ the quotient of which can be identified with $W_\eps$. The corresponding $\mathbb{C}^*$-action on $T^*S^{2n+1}$ gives the standard Weinstein structure on $T^*\C P^n.$   

The function $\phi_0$ has only one critical point $([1: 0 : ... : 0], [1 : 0 : ... : 0])$, which has index 0. The set of critical points of  $\phi_\eps$  
consists of the critical point of $\phi_0$ and $n$ points $$\mbox{Crit}(\bar{S}) = \{([0 : 0 : ... : \underbrace{1}_\text{$i^{th}$} :  0  : ... : 0] \big| 1 \leq i \leq n\},$$ with critical indices $2, 4, ..., 2n.$  
\end{example}

\begin{example}
Consider two holomorphic sections $\tilde{s}_0$ and $\tilde{s}_1$ of some positive line bundle$\tilde{\cL} \ra X$ such that $\tilde{D}_0 = \tilde{s}_0^{-1}(0)$ is smooth. Define $\tilde{X} = X \times \mathbb{C}P^1$ with line bundle $\cL = \tilde{\cL} \otimes \mathcal{O}(H).$ Define $s_0 = z_0\tilde{s}_0$ and $h = z_1\tilde{s}_1$ and assume that $D_\eps$ is smooth except for $\eps = 0$. Notice that $D_0 = \tilde{D}_0 \times \mathbb{C}P^1 \cup X \times [0:1]$ is a normal crossing divisor with no triple intersection. Its singular set $S$ is $\tilde{D}_0 \times [0 : 1]$. The divisor $D_\eps$ is the blow up of $X$ along the base locus $\tilde{B} = \{\tilde{s}_0 = \tilde{s}_1 = 0\}$ of the pencil generated by $\tilde{s}_0$ and $\tilde{s}_1$.  

The complement $W_0$ is $X \times \mathbb{C}$ with subcritical Weinstein structure (i.e. the index of each critical point is strictly less than $\mbox{dim}_{\mathbb{C}} W_0$). All of the interesting symplectic topology lies in a neighborhood of $\bar{S} \simeq \tilde{D}_0 \bsh \tilde{B}$ in $\tilde{X}.$
\end{example} 

The following proposition is a generalization of Example \ref{ex1} and Example \ref{ex2}.

\begin{proposition} \label{prop1}
Let $E \subset TX|_{\bar{S}}$ be the $\omega$-orthogonal complement of $T\bar{S}$. Assume that the derivative of $g = -\mbox{log}\|h\|^2$ vanish in $E$. Then $\mbox{Crit}(\bar{S}) \subset \mbox{Crit}(\phi_\eps).$ Moreover, the vector field $Z_\eps$ is tangent to $\bar{S}$. As a consequence, the unstable submanifold of a critical point $p$ of $\phi_\eps$ which lies in $\mbox{Crit}(\bar{S})$ contains the unstable submanifold (lying in $\bar{S}$) of $p$ of $\phi_\eps|_{\bar{S}}.$
\end{proposition}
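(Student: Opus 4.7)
The plan is to work in the local holomorphic coordinates $(z_0,z_1,\mathbf{z})$ from the proof of Theorem~\ref{1}, in which $s_\eps = (z_0z_1+\eps)''1''$ near a point of $\bar{S}$ and $\bar{S}$ is cut out by $z_0=z_1=0$. In these coordinates $h$ corresponds to the trivialization $''1''$, so $g=-\mbox{log}\|h\|^2$ agrees locally with $-\mbox{log}\|''1''\|^2$ and $\phi_\eps = -\mbox{log}|z_0z_1+\eps|^2 + g$. The one-line observation that drives everything is that $d(z_0z_1) = z_1\,dz_0 + z_0\,dz_1$ vanishes identically on $\bar{S}$; consequently, at every $q\in\bar{S}$ and on all of $T_qX$, one has $d\phi_\eps(q)=dg(q)$.

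From this the first two assertions are immediate. Because $\bar{S}$ is a complex (hence symplectic) submanifold of the K\"ahler manifold $X$, there is a decomposition $T_qX = T_q\bar{S} \oplus E_q$, and $E_q$ coincides with the Hermitian orthogonal of $T_q\bar{S}$ and is therefore $J$-invariant. For $p\in\mbox{Crit}(\bar{S})=\mbox{Crit}(g|_{\bar{S}})$, the condition $dg|_{T_p\bar{S}}=0$ combines with the hypothesis $dg|_{E_p}=0$ to yield $dg(p)=0$, hence $d\phi_\eps(p)=0$. To see that $Z_\eps$ is tangent to $\bar{S}$, it suffices to verify $\lambda_\eps(v)=0$ for each $v\in E_q$, $q\in\bar{S}$: using $\lambda_\eps=-d^c\phi_\eps=-J^*d\phi_\eps$ together with $Jv\in E_q$, one obtains $\lambda_\eps(v)=-d\phi_\eps(Jv)=-dg(Jv)=0$ by the standing hypothesis applied to $Jv$.

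For the statement about unstable submanifolds, tangency of $Z_\eps$ to $\bar{S}$ means the flow of $Z_\eps$ preserves $\bar{S}$, so any point of $\bar{S}$ that converges to $p$ under the negative-time flow of $Z_\eps$ is automatically in the unstable submanifold of $p$ for $Z_\eps$ in $W_\eps$. On $\bar{S}$ itself the restricted objects satisfy $i_{Z_\eps|_{\bar{S}}}(\omega|_{\bar{S}})=\lambda_\eps|_{\bar{S}}=-d^c(\phi_\eps|_{\bar{S}})$, using that pulling back by the inclusion commutes with $d^c$ on a complex submanifold, so $Z_\eps|_{\bar{S}}$ is the Liouville vector field for the induced (local) Weinstein structure on $\bar{S}$, and its unstable submanifold at $p$ is precisely the one referred to in the statement.

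There is no really hard step here; the verification is formal once the coordinate picture is set up. The two subtleties to keep straight are (i) that $d(z_0z_1)$ vanishes along $\bar{S}$---which is what kills the singular contribution of $-\mbox{log}|z_0z_1+\eps|^2$ to $d\phi_\eps$ on the nose---and (ii) that the $\omega$-orthogonal $E$ is $J$-invariant, so that the hypothesis $dg|_E=0$ implies $d^c\phi_\eps|_E=0$ along $\bar{S}$.
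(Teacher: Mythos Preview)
Your proof is correct and follows essentially the same line as the paper's: the key observation is that $d\phi_\eps = dg$ along $\bar{S}$ (the paper phrases this invariantly as $d\log|s_0/h|^2=0$ on $\bar{S}$, you as $d(z_0z_1)=0$ there in local coordinates), after which the hypothesis $dg|_E=0$ together with the $J$-invariance of $E$ immediately yield both $\mbox{Crit}(\bar{S})\subset\mbox{Crit}(\phi_\eps)$ and $\lambda_\eps|_E=0$, hence $Z_\eps\in E^{\perp_\omega}=T\bar{S}$. Your write-up is more explicit about why $E$ is $J$-invariant and about the unstable-manifold consequence, but the argument is the same.
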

\begin{proof} The proof is straightforward from the following computation. In $\bar{S}$, $d\mbox{log}|s_0/h|^2 = 0.$ Thus since $dg|_{E} = 0$, $p \in \mbox{Crit}(\bar{S})$ implies that $d\phi_\eps(p) = 0,$ i.e. $p$ is a critical point of $\phi_\eps$. Since $\lambda_\eps = -d^{c}\phi_\eps = -d^{c}g$ vanish in $E$, $Z_\eps$ must be $(d\lambda_\eps = \omega)-$orthogonal to $E$. Hence $Z_\eps$ is tangent to $\bar{S}$. 
\end{proof}

The rest of the paper discusses about the unstable submanifold of $p_\eps \in \mbox{Crit}(\phi_\eps)$ which corresponds to $p \in \mbox{Crit}(\bar{S}).$ Under special assumption, Proposition \ref{prop1} implies that the unstable submanifold $\cU_p$ of $p_\eps$ is a disc bundle over the unstable submanifold $\tilde{\cU}_p$ lying in $\bar{S}$ of $p$.

\begin{figure*}[htp]
\centering
\includegraphics[width=\textwidth]{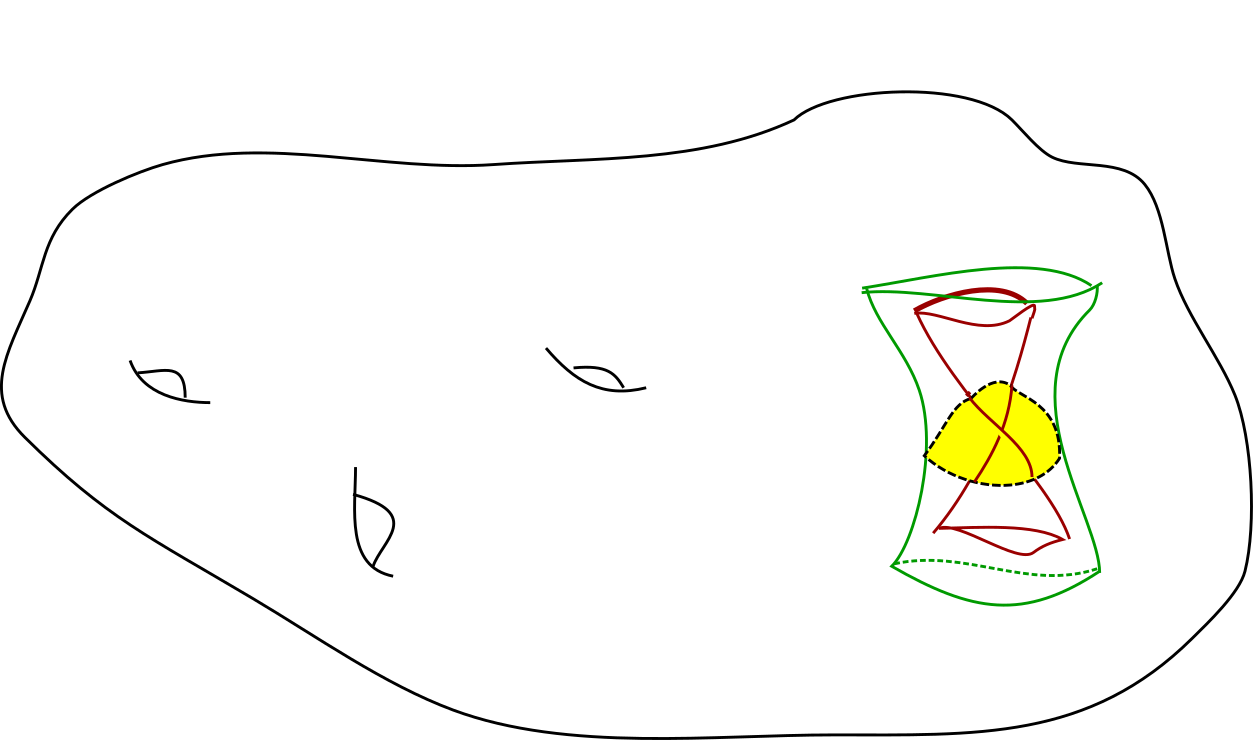}
\caption{The thimble $D_\eps(p)$ in the case $\mbox{dim}_\mathbb{C} X = 2.$}
\end{figure*}

\begin{example} Consider $\mathbb{C}^n$ with coordinates $(z_0, z_1, \mathbf{z})$ and $$\phi(z_0, z_1, z) = -\mbox{log}|z_0z_1 + 1|^2 + |z_0|^2 + |z_1|^2 + \psi(\mathbf{z})$$ a pluri-subharmonic function. If $p$ is a critical point of $\phi$, then $z_0(p) = z_1(p) = 0$ and $p$ is a critical point of $\psi$. Denote by $\tilde{\cU}_p \subset \{z_0 = z_1 = 0\}$ the unstable submanifold of $p$ of $\psi$. Then the unstable submanifold $\cU_p$ of $p$ of $\phi$ is $\cU_p = \{(z_0, z_1, \mathbf{z}) | z_0 = -\bar{z}_1, |z_0| \leq 1, \mathbf{z} \in \tilde{\cU}_p\}.$       
\end{example}

In $X \bsh h^{-1}(0)$, there is a fibration $\pi: X \bsh h^{-1}(0) \ra \mathbb{C},$ $\pi := s_0/h$. Denote by $$\eta_\eps := -d^c\mbox{log}|\pi + \eps|^2 \in H^1(X \bsh h^{-1}(0))$$ and vector field $X_\eps$ given by $i_{X_\eps}\omega = \eta_\eps$.  

Since we assume $\pi^{-1}(\eps)$ is smooth for small $\eps$, the fibration induces a connection in $X \bsh (h^{-1}(0) \cup S)$ as follows: at each point $z$, define $C_z \subset T_zX$ the subspace that is $\omega-$orthogonal to the fiber $\pi^{-1}(\pi(z))$. Consider $\gamma: [0, 1] \ra \mathbb{C}: \gamma(t) = t$. The connection induces a characteristic foliation on $\pi^{-1}((0, 1])$. For any compact subset $K \subset \bar{S}$, we denote by $D_\eps(K)$ the set of points in $\pi^{-1}((0, \eps])$ which under the characteristic flow converge to  points in $K$. Notice that $\{D_\eps(K)\}_\eps$ is a nested sequence: if $\eps_1 < \eps_2,$ $D_{\eps_1}(K) \subset D_{\eps_2}(K).$ This is a normal crossing generalization of the standard Lefschetz fibration picture in symplectic topology (\cite{Sei1}). 

The following lemma is crucial. 

\begin{lemma} \label{lefschetz}  For every $\epsilon$, $X_\eps$ is tangent to $\pi^{-1}(0, \eps)$ and lies along the characteristic foliation.

\end{lemma}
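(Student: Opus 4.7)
The plan is to recognize $\eta_\eps$ as a $\pi$-pullback and then split the equation $i_{X_\eps}\omega = \eta_\eps$ according to the vertical-horizontal decomposition induced by $\pi$.

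First I would write $\eta_\eps$ in a more convenient form. Since $\pi + \eps = s_\eps/h$ is a ratio of holomorphic sections of the same line bundle, it is globally meromorphic and, on $\pi^{-1}((0,\eps))$, it is a nonvanishing holomorphic function in local coordinates. For any nonvanishing holomorphic $f$, a direct computation gives $-d^c\log|f|^2 = 2\,d\arg f$, so
$$\eta_\eps = \pi^*\bigl(2\,d\arg(w+\eps)\bigr),$$
where $w$ denotes the coordinate on $\mathbb{C}$.

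Once $\eta_\eps$ is written as a $\pi$-pullback, the two conclusions separate cleanly. Since $\eta_\eps$ annihilates $\ker d\pi$ pointwise, the defining equation gives $\omega(X_\eps, v) = 0$ for all $v \in \ker d\pi$, forcing $X_\eps \in (\ker d\pi)^{\omega} = C_z$; this is the ``lies along the characteristic foliation'' half. For tangency to $\pi^{-1}((0,\eps))$, I would push the equation down via $d\pi$: on $C_z$, $\omega$ descends to a positive 2-form $\omega_B$ on the base via $\omega_B(Z_1, Z_2) = \omega(\hat Z_1, \hat Z_2)$, and the equation becomes $\omega_B(d\pi X_\eps, \cdot) = 2\,d\arg(w+\eps)$. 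At a point $w_0 \in (0,\eps)$, the form $d\arg(w+\eps)$ restricts to $dv/(w_0 + \eps)$, which annihilates the real direction $\partial_u$; combined with the Kahler positivity of $\omega_B$, $\omega_B$-duality then forces $d\pi X_\eps$ to be a positive real multiple of $\partial_u$. Hence $d\pi X_\eps$ is tangent to $(0,\eps) \subset \mathbb{C}$ and $X_\eps$ is tangent to $\pi^{-1}((0,\eps))$.

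The main obstacle is not really in the algebraic argument but in verifying the setup: one needs $\pi$ to be a submersion on $\pi^{-1}((0,\eps))$ so that $C_z$ is defined, and one needs $\pi + \eps$ to have no zero or pole on the relevant region. For sufficiently small $\eps > 0$, the preimage $\pi^{-1}((0,\eps))$ avoids $h^{-1}(0)$ and the fibers containing critical points of $\pi$ (which give a discrete set of values in the base), so both conditions hold automatically and the calculation then proceeds cleanly.
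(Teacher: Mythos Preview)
Your argument is correct and essentially the same as the paper's: both observe that $\eta_\eps$ annihilates the vertical tangent spaces (giving $X_\eps\in C_z$), and both reduce the tangency statement to showing that $d\pi(X_\eps)$ points along the real axis at points of $(0,\eps)$. The only cosmetic difference is packaging: the paper argues that $JX_\eps$ is tangent to the level sets $\{|\pi+\eps|=C\}$ and then uses holomorphicity of $\pi$ to conclude $d\pi(X_\eps)$ is radial from $-\eps$, whereas you first write $\eta_\eps$ as a $\pi$-pullback and do the duality computation directly on the base; these are the same computation viewed from two sides.
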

\begin{proof} Since $i_{X_\eps}\omega = \eta_\eps,$ $JX_\eps$ is tangent to the level sets $|\pi| = C$. In particular, $d\pi(JX_\eps)$ is tangent to the circle $|z| = C.$ Since $\pi$ is holomorphic, $d\pi(X_\eps)$ is perpendicular to the circle $|z| = C$. Thus, $X_\eps$ is tangent to $\pi^{-1}(0, \eps)$. 

By definition of $\eta_\eps$, $\eta_\eps(T\pi^{-1}(z)) = 0$. Hence, $X_\eps$ is $\omega-$orthogonal to every fiber. Therefore, $X_\eps$ must lie along the characteristic foliation.
\end{proof}

Denote by $\tilde{\cU}_p$ the unstable submanifold (lying in $\bar{S}$) of $p \in \mbox{Crit}(\bar{S})$. We have the following theorem.

\begin{theorem} \label{2}  Suppose $p$ has critical index, i.e. $\tilde{\mathcal{U}}_p$  is a Lagrangian in $\bar{S}$. Let $K \subset \tilde{\cU}_p$ be a compact ball containing $p$. Given a small neighborhood $V \subset X$ of $K$, then for each sufficiently small $\eps$, there exists a Weinstein homotopy that starts with $(W_\eps, \lambda_\eps, \phi_\eps)$ and stays constant outside $V$ such that the resulting Weinstein structure has in $V$ a unique critical point coinciding with $p$ and its unstable submanifold in $K$ is $D_\eps(K)$.   
\end{theorem}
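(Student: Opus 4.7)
The plan is to deform $\phi_\eps$ inside $V$ to a local model in which Proposition \ref{prop1} applies near $K$ and the transverse unstable discs of the resulting Liouville field visibly coincide with the thimbles of the characteristic foliation of $\pi$. First, Theorem \ref{1} provides a unique critical point $p_\eps$ of $\phi_\eps$ near $p$ for small $\eps$; by a preliminary Weinstein homotopy supported in a tiny ball around $p$, I would translate $p_\eps$ to $p$. In local holomorphic coordinates $(z_0,z_1,\mathbf{z})$ near $p$ with $s_\eps=(z_0z_1+\eps)\mathbf{1}$, write
\[\phi_\eps = -\log|z_0z_1+\eps|^2 + g,\qquad g:=-\log\|\mathbf{1}\|^2.\]
A short sequence of Weinstein homotopies supported in $V$---rescaling the transverse coordinates, adjusting the trivialization $\mathbf{1}$ by a holomorphic unit, and adding $C^2$-small pluri-harmonic correction terms (which do not affect $\omega=-dd^cg$)---normalizes $g$ so that in a neighborhood of $K$ in $\bar S=\{z_0=z_1=0\}$,
\[g = |z_0|^2 + |z_1|^2 + \psi(\mathbf{z}) + O(|z_0,z_1|^3),\qquad \psi:=g|_{\bar S}.\]

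Next, I would carry out the main homotopy. Pick a cutoff $\chi$ supported in $V$ and equal to $1$ on a smaller neighborhood $V_0\subset V$ of $K$, and set
\[g_t := (1-t\chi)g + t\chi\bigl(|z_0|^2+|z_1|^2+\psi(\mathbf{z})\bigr),\qquad \phi_{\eps,t}:=-\log|z_0z_1+\eps|^2 + g_t.\]
The correction $\chi(g-|z_0|^2-|z_1|^2-\psi)$ vanishes to third order along $\bar S\cap V_0$, so a direct estimate shows that on a tubular neighborhood of $\bar S\cap K$ of transverse radius $R$ its $C^2$-norm is $O(R)$. Taking $V$ sufficiently narrow transverse to $\bar S$, the family $g_t$ remains strictly pluri-subharmonic, $-dd^c\phi_{\eps,t}$ stays a positive $(1,1)$-form, and $p$ remains the unique critical point of $\phi_{\eps,t}$ in $V$ throughout the homotopy. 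The endpoint $\phi_{\eps,1}$ coincides on $V_0$ with the model $-\log|z_0z_1+\eps|^2+|z_0|^2+|z_1|^2+\psi(\mathbf{z})$ and with $\phi_\eps$ outside $V$, yielding a valid Weinstein homotopy supported in $V$.

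Finally, I would identify the unstable manifold. For $\phi_{\eps,1}$ on $V_0$, Proposition \ref{prop1} applies: the differential of $|z_0|^2+|z_1|^2+\psi$ vanishes along $\bar S$ on the transverse plane $E$. Hence $Z_{\eps,1}$ is tangent to $\bar S$ and the Weinstein dynamics decomposes as the gradient-like flow of $\psi$ on $\bar S$ together with the transverse dynamics on each fiber. In each transverse slice over $q\in K$ the Weinstein structure is precisely the model analyzed in the preceding example, whose unstable manifold at $q$ is the Lagrangian disc $\{z_0=-\bar z_1\}$. On this disc $\pi=z_0z_1$ traces a real interval from $0$ up to the critical value at which $\phi_{\eps,1}$ diverges (where $D_\eps$ is met), and by Lemma \ref{lefschetz} this interval is the characteristic lift of $\gamma$; this identifies the disc with the fiber of $D_\eps(K)$ over $q$. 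Assembling over $K$ yields the theorem.

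The main obstacle lies in the normalization step and in maintaining strict pluri-subharmonicity throughout the interpolation. Both are resolved by the third-order vanishing of the correction along $\bar S$, which forces the perturbation to be $C^2$-small on a thin tube around $\bar S\cap K$, combined with the fact that the strictly positive term $|z_0|^2+|z_1|^2$ in the model dominates $C^2$-small perturbations. A further technical subtlety arises when $K$ does not lie in a single holomorphic chart; this is handled by iterating the construction over a finite open cover of $K$ with cutoff functions forming a subordinate partition of unity.
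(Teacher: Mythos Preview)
Your strategy of interpolating the pluri-subharmonic potential $g$ to an explicit model has a genuine gap at the very last step, the identification of the unstable disc with $D_\eps(K)$.  The homotopy $g_t$ changes the symplectic form: at time $t=1$ on $V_0$ you have replaced $\omega=-dd^cg$ by the standard form $\omega_{1}=-dd^c(|z_0|^2+|z_1|^2+\psi)$.  The disc $\{z_0=-\bar z_1\}$ you obtain is the unstable manifold of the Liouville field of $(\omega_1,\lambda_{\eps,1})$, and it is a thimble of the characteristic foliation defined by $\omega_1$-orthogonality to the fibres of $\pi$.  But $D_\eps(K)$ in the statement is the thimble for the \emph{original} $\omega$: the connection $C_z$ is the $\omega$-orthogonal complement of $T\pi^{-1}(\pi(z))$, and Lemma~\ref{lefschetz} concerns the vector field $X_\eps$ determined by $i_{X_\eps}\omega=\eta_\eps$.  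Since you have altered $\omega$ inside $V_0$, your invocation of Lemma~\ref{lefschetz} does not land on $D_\eps(K)$ but on its $\omega_1$-analogue, and there is no reason for these two thimbles to coincide as subsets of $X$.  (A secondary issue is the normalization $g=|z_0|^2+|z_1|^2+\psi+O(3)$: the transverse Hermitian form of $g$ along $\bar S$ has a mixed term $c(q)z_0\bar z_1+\overline{c(q)}\bar z_0 z_1$ that is neither pluri-harmonic nor removable by coordinate changes preserving $z_0z_1$, so the model is not attainable in general.)

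The paper avoids this problem by never touching $\omega$.  It takes Weinstein-neighborhood coordinates around the Lagrangian thimble $D_{\eps_0}(K)$ itself (symplectic, not holomorphic) and interpolates between two Liouville \emph{primitives} of the same $\omega$: the original $\lambda_\eps$ and a model $\xi_\eps=\eta_\eps-\sum v_i\,du_i$.  Because both share the singular piece $\eta_\eps=-d^c\log|\pi+\eps|^2$, their difference is a bounded closed $1$-form, and writing $\lambda_\eps-\xi_\eps=dH$ with $H(p)=0$ gives an interpolation $\tilde\lambda_\eps$ with $d\tilde\lambda_\eps=\omega$ throughout.  The key estimate (Lemma~\ref{bound}) is that $\tilde\lambda_\eps-\eta_\eps$ stays uniformly bounded, so the dominant $\eps_0^{-1}$-scale gradient coming from $-\log|\pi+\eps|^2$ controls the interpolated Liouville field, and Lemma~\ref{lefschetz} then directly identifies the unstable manifold with $D_\eps(K)$ for the \emph{original} $\omega$.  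To repair your argument you would essentially have to switch from interpolating potentials to interpolating Liouville forms of the fixed $\omega$, at which point the Weinstein-neighborhood coordinates (rather than holomorphic ones) become the natural setting and the proof converges to the paper's.
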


\begin{proof} We first prove the case $\mbox{dim}_\mathbb{C}X = 2$. 

In complex two dimensional case, $S$ is a set of finitely many points. Let $p \in S$ and $K = \{p\}$. Denote by $D_\eps(p) := D_\eps(K)$, also called a thimble in the Lefschetz fibration $\pi$. Fix an $\eps_0 > 0$ (we will later make a choice of sufficiently small $\eps_0$). Since $D_{\eps_0}(p)$ is a Lagrangian disc, Weinstein Neighborhood Theorem implies its neighborhood is symplectomorphic to $T^*D^2.$  This gives coordinate $(u_1, v_1, u_2, v_2)$ in a neighborhood of $D_{\eps_0}(p)$ such that
\begin{itemize} 
\item $D_{\eps_0}(p) = \{u^2_1 + u^2_2  \leq \eps^2_0, v_1 = v_2 = 0\}$,
\item the symplectic form $\omega = du_1 \wedge dv_1 + du_2 \wedge dv_2$,
\item the complex structure $J$ satisfies $J(\pa /\pa u_1) = \pa /\pa v_1$, $J(\pa/\pa u_2) = \pa/\pa v_2$ on $D_{\eps_0}(p).$
\end{itemize}  

In this neighborhood, define $\xi_\eps: = \eta_\eps - v_1du_1 - v_2du_2$ and $\psi_\eps := \phi_\eps(p) -\mbox{log}|\pi + \eps|^2 + \frac{1}{2}(v_1^2 + v_2^2)$. Notice that $d\xi_\eps = \omega$. The Liouville vector field $Y_\eps$ of $\xi_\eps$ is given by $$Y_\eps = X_\eps + v_1\frac{\pa}{\pa v_1} + v_2\frac{\pa}{\pa v_2}.$$

Notice that Lemma \ref{lefschetz} implies that $D_\eps(p)$ is the unstable submanifold of $Y_\eps$. In fact, for sufficiently small $\delta > 0$, the neighborhood $V_\delta := \{u^2_1 + u^2_2 \leq \eps_0^2, |v_1|, |v_2| < \delta\}$ satisfies $(V_\delta \bsh D_\eps, \xi_\eps, \psi_\eps)$ is a Weinstein structure for all sufficiently small $\eps$. This follows from $-d^{c}\psi_\eps = \xi_\eps$ and $-dd^{c}\psi_\eps =\omega$ on $D_{\eps_0}(p)$. 

Since $d\lambda_\eps - d\xi_\eps = 0$ and $\lambda_\eps - \xi_\eps$ is a finite one form in $V_\delta$ (although both $\lambda_\eps$ and $\xi_\eps$ blow up in $D_\eps \cap V_\delta$), we can find $H: V_\delta \ra \mathbb{R}$ such that $$\lambda_\eps - \xi_\eps = dH \hspace{0.2 cm} \mbox{and} \hspace{0.2 cm} H(p) = 0.$$ 

Let $\rho: \mathbb{R} \ra \mathbb{R}$ be an increasing smooth function with compact support $[\eps_0/4, +\infty)$  such that $\rho|_{[\eps_0/2, +\infty]} = 1$ and $|\rho'| < 10\eps_0^{-1}$.
Denote $$r := u_1^2 + v_1^2 + u_2^2 + v_2^2$$ and define $$\tilde{\lambda}_\eps := (1-\rho(r))\xi_\eps + \rho(r)\lambda_\eps + Hd\rho.$$

This 1-form $\tilde{\lambda}_\eps$ is $\xi_\eps$ in $\{r < \eps_0/4\}$ and is $\lambda_\eps$ in $\{r > \eps_0/2\}.$ Moreover, it is a primitive of $\omega$ $$d\tilde{\lambda}_\eps = (1-\rho(r) + \rho(r))\omega + d\rho \wedge (-\xi_\eps + \lambda_\eps) + dH \wedge d\rho = \omega.$$

The following lemma is crucial.

\begin{lemma} \label{bound} $\tilde{\lambda}_\eps - \eta_\eps$ is $\eps_0$-uniformly bounded in $V_\delta$.
\end{lemma}
\begin{proof} Indeed, it is sufficient to check that $Hd\rho$ is $\eps_0$-uniformly bounded. Since $H(p) = 0$, there is $C > 0$ such that $$H(u,v) \leq C\sqrt{r}.$$ Also $C$ can be choosen so that $$|d\rho| = |\rho'(r)| |dr| \leq \frac{C\sqrt{r}}{\eps_0}.$$
Hence on $\{ \eps_0/4 < r < \eps_0/2\}$, we obtain $|Hd\rho| < C/2.$ Outside this region, by definition $Hd\rho = 0$. Thus, $Hd\rho$ is $\eps_0$-uniformly bounded.\end{proof}   

Lemma \ref{bound} implies that the Liouville vector field $\tilde{Z}_\eps$  of $\tilde{\lambda}_\eps$ (i.e. $i_{\tilde{Z}_\eps}\omega = \tilde{\lambda}_\eps$) satisfies $\tilde{Z}_\eps - X_\eps$ is $\eps_0$-uniformly bounded. Denote $$\tilde{\phi}_\eps := (1-\rho)\psi_\eps + \rho \phi_\eps.$$
We will show that $(W_\eps, \tilde{\lambda}_\eps, \tilde{\phi}_\eps)$ is a Weinstein structure when $\eps_0$ and $\eps$ are sufficiently small. In $\{r < \eps_0/4\},$ it is the Weinstein structure $(\xi_\eps, \psi_\eps)$ while in $\{ r> \eps_0/2\}$, it is the Weinstein structure $(\lambda_\eps, \phi_\eps)$. Consider the region $\{\eps_0/4 < r < \eps_0/2\}$. Write $$d\tilde{\phi}_\eps = (1-\rho)d\psi_\eps + \rho d\phi_\eps + (\phi_\eps - \psi_\eps)d\rho.$$
Since $\phi_\eps - \psi_\eps$ is a finite function in $V_\delta$ with value $0$ at $p$, a similar arguments as in Lemma \ref{bound} implies that $(\phi_\eps - \psi_\eps)d\rho$ is $\eps_0$-uniformly bounded. Hence, $d\tilde{\phi}_\eps + d\mbox{log}|\pi + \eps|^2$ is $\eps_0$-uniformly bounded. Now, $$d\mbox{log}|\pi + \eps|^2(\tilde{Z}_\eps) = d\mbox{log}|\pi + \eps|^2(X_\eps) + d\mbox{log}|\pi + \eps|^2(\tilde{Z}_\eps- X_\eps)$$ where the first term has order $\eps_0^{-1}$ and the later has order $\eps_0^{-1/2}$ if $\eps/\eps_0$ is sufficiently small . Therefore by choosing $\eps_0$ sufficiently small and $\eps$ such that $\eps/\eps_0$ sufficiently small, we obtain that $\tilde{Z}_\eps$ is gradient-like with respect to $\tilde{\phi}_\eps$. 

To construct a homotopy of Weinstein structure from $(W_\eps, \lambda_\eps, \phi_\eps)$ to $(W_\eps, \tilde{\lambda}_\eps, \tilde{\phi}_\eps)$, consider the path of 1-forms $(1-t)\lambda_\eps + t\tilde{\lambda}_\eps$ and functions $(1-t)\phi_\eps + t\tilde{\phi}_\eps$, where $t \in [0,1]$. A similar argument as before show that they form a Weinstein structure in $\{\eps_0/4 < r < \eps_0/2\}$. In the region $\{ r \leq \eps_0/4\}$, unfortunately they are not. For example, the zero of the vector field may not be a critical point of the function. However, a similar argument as in Theorem \ref{1} implies that for sufficiently small $\eps$, there is a unique zero of $(1-t)\lambda_\eps + t\tilde{\lambda}_\eps$ and a unique critical point of $(1-t)\phi_\eps + t\tilde{\phi}_\eps$ in $\{r \leq \eps_0 /4\}$. Since $-d^{c}\psi_\eps = \xi_\eps$ and $-dd^{c}\psi_\eps =\omega$ on $D_{\eps_0}(p)$, we also obtain $$-d^c[(1-t)\phi_\eps + t\tilde{\phi}_\eps] = (1-t)\lambda_\eps + t\tilde{\lambda}_\eps,$$ $$-dd^c[(1-t)\phi_\eps + t\tilde{\phi}_\eps] = d((1-t)\lambda_\eps + t\tilde{\lambda}_\eps) = \omega,$$ on $D_{\eps_0}(p)$.  Choosing $\eps_0$ sufficiently small, we can modify $(1-t)\phi_\eps + t\tilde{\phi}_\eps$ to make it a Weinstein structure with $(1-t)\lambda_\eps + t\tilde{\lambda}_\eps$ in $\{ r \leq \eps_0/4\}$.

Therefore in the case $\mbox{dim}_\mathbb{C} X = 2$, we obtain a Weinstein homotopy from $(W_\eps, \lambda_\eps, \phi_\eps)$ to $(W_\eps, \tilde{\lambda}_\eps, \tilde{\phi}_\eps)$ which satisfies: $\mbox{Crit}(\bar{S}) \subset \mbox{Crit}(\tilde{\phi}_\eps)$ and the unstable submanifold $p \in \mbox{Crit}(\bar{S})$ is $D_\eps(p)$. 

In higher dimensional case, the proof is similar. Let $K$ be a compact ball in $\bar{S}$ containing $p$. Since $K$ is a Lagrangian in $\bar{S}$ and $X_\eps$ preserves $\omega$, $D_{\eps_0}(K)$ is   Lagrangian in $X$. For sufficiently small $\eps_0$, $D_{\eps_0}(K)$ is a two-disc bundles over $K$.

 Choose coordinates $(u_1, v_1, u_2, v_2,  ... , u_n, v_n)$ in a neighborhood of $K$ so that
\begin{itemize}
\item  $S = \{u_1 = v_1 = u_2 = v_2 = 0\}$ and $D_{\eps_0}(K) = \{u^2_1 + u^2_2  \leq \eps^2_0, v_1 = v_2 = 0, (u_3, v_3, ...u_n, v_n) \in K\}$.
\item the symplectic form $\omega = du_1 \wedge dv_1 + du_2 \wedge dv_2 + ... + du_n \wedge dv_n$,
\item the complex structure $J$ satisfies $J(\pa /\pa u_i) = \pa /\pa v_i$ for $1 \leq i \leq n$ on $D_{\eps_0}(K).$
\end{itemize}

The coordinate defines a projection $\pi_K$ from the neighborhood to $\bar{S}$. Define $\xi_\eps: = \eta_\eps - v_1du_1 - v_2du_2 -  ... - v_ndu_n$ and $\psi_\eps := \phi_\eps \circ \pi_K -\mbox{log}|\pi + \eps|^2 + \frac{1}{2}(v_1^2 + v_2^2)$. Notice that $d\xi_\eps = \omega$. The Liouville vector field $Y_\eps$ of $\xi_\eps$ is given by $$Y_\eps =  Z_{S} + X_\eps + v_1\frac{\pa}{\pa v_1} + v_2\frac{\pa}{\pa v_2},$$ where $Z_S$ is the Weinstein vector field of $(\bar{S}, \lambda_\eps|_{\bar{S}}, \phi_\eps|_{\bar{S}})$ and is independent of $\eps$. 

Denote $V_\delta := \{u^2_1 + u^2_2 \leq \eps_0^2, |v_1|, |v_2| < \delta\}.$ Since $V_\delta$ is contractible, we can similarly construct $H: V_\delta \ra \mathbb{R}$ such that $\lambda_\eps - \xi_\eps = dH$ such that $H|_{V_\delta \cap \bar{S}} = 0.$ Let $f: \bar{S} \rightarrow \mathbb{R}$ such that $f$ has compact support in a neighborhood of $K$. Let $\rho$ be the cutoff function as before except that we also multiply it with $\pi^*_{K}f$. The 1-form $Hd\rho = 0$ along $V_\delta \cap \bar{S}$ and hence can be arbitrarily small along the horizontal directions , (i.e. the $(u_3, v_3, ..., u_n, v_n)-$ directions) if $\delta$ is sufficiently small. Also, as is proven in the lower dimensional case, $Hd\rho$ is $\eps_0$-uniformly bounded along the vertical directions (i.e. the $(u_1, v_1, u_2, v_2)-$ directions). This enables us to adapt the proof of the case $\mbox{dim}_{\mathbb{C}}X = 2$ to higher dimensional case.

\end{proof} 

The following corollaries are immediate. 

\begin{corollary} Given $K \subset \tilde{\cU}_p$ some sublevel in $\bar{S}$, for sufficiently small $\eps$, there is a Weinstein homotopy of $(W_\eps, \lambda_\eps, \phi_\eps)$ to $(W_\eps, \tilde{\lambda}_\eps, \tilde{\phi}_\eps)$ such that the unstable submanifold of $p$ in some regular sublevel $\{\tilde{\phi}_\eps \leq C\}$ is $D_\eps(K) \cap \{\tilde{\phi}_\eps \leq C\}$. 
\end{corollary}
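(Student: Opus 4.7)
The plan is to apply Theorem \ref{2} directly to the given sublevel $K$ and then choose the regular value $C$ low enough that the global unstable submanifold of $p$ under the new Weinstein structure cannot have escaped the neighborhood on which the homotopy was supported.

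Concretely, $K \subset \tilde{\cU}_p$ is, by hypothesis, a sublevel set of the Morse function $\phi_\eps|_{\bar{S}}$ restricted to the Lagrangian unstable submanifold $\tilde{\cU}_p$; in particular it is compact. Applying Theorem \ref{2} to this $K$ yields, for all sufficiently small $\eps$, a neighborhood $V \subset X$ of $K$ together with a Weinstein homotopy, constant outside $V$, terminating at a Weinstein structure $(W_\eps, \tilde{\lambda}_\eps, \tilde{\phi}_\eps)$ whose unique critical point in $V$ is $p$ and whose unstable submanifold, restricted to $V$, coincides with the thimble bundle $D_\eps(K)$. Denote the global unstable submanifold of $p$ for this new structure by $\cU_p$.

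To choose $C$, observe that any trajectory of $\tilde{Z}_\eps$ emanating from $p$ that eventually exits $V$ must first cross the ``vertical boundary'' $D_\eps(\pa K)$ of $D_\eps(K)$, because $\cU_p \cap V = D_\eps(K)$ and $D_\eps(\pa K)$ is exactly the intersection of $D_\eps(K)$ with $\pa V$. Set $C_{\text{exit}} := \inf_{D_\eps(\pa K)} \tilde{\phi}_\eps$; compactness of $\pa K$ together with strict monotonicity of $\tilde{\phi}_\eps$ along the Liouville flow gives $C_{\text{exit}} > \tilde{\phi}_\eps(p)$. Pick any regular value $C \in (\tilde{\phi}_\eps(p), C_{\text{exit}})$. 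Then any trajectory in $\cU_p$ with $\tilde{\phi}_\eps \leq C$ has not yet reached $D_\eps(\pa K)$, hence remains in $V$; and once a trajectory does leave $V$ through $D_\eps(\pa K)$ it has $\tilde{\phi}_\eps \geq C_{\text{exit}} > C$ and can only increase thereafter, so it cannot re-enter the sublevel. Consequently $\cU_p \cap \{\tilde{\phi}_\eps \leq C\} \subset V$, and combined with $\cU_p \cap V = D_\eps(K)$ this forces the desired identity $\cU_p \cap \{\tilde{\phi}_\eps \leq C\} = D_\eps(K) \cap \{\tilde{\phi}_\eps \leq C\}$.

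The only real issue is the simultaneous choice of neighborhood $V$ and level $C$: Theorem \ref{2} fixes the geometry inside $V$, and then $C$ must be picked strictly below the values of $\tilde{\phi}_\eps$ on the transversal exit locus $D_\eps(\pa K)$. I expect this to be the main (but mild) obstacle, as it requires the strict gradient-like estimate for $\tilde{Z}_\eps$ on $D_\eps(\pa K)$ to be uniform in $\eps$; this uniformity follows from the uniform bounds established in Lemma \ref{bound} and the gradient-like argument in the proof of Theorem \ref{2}.
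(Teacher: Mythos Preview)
Your argument is correct and is exactly the ``immediate'' deduction the paper has in mind: the paper gives no proof of this corollary beyond the sentence \emph{The following corollaries are immediate}, and your choice of a regular value $C$ strictly below $\inf_{D_\eps(\partial K)}\tilde{\phi}_\eps$ is the natural way to make that explicit. One small quibble: in Theorem~\ref{2} the neighborhood $V$ is an input, not an output, so you should phrase it as choosing $V$ first (say, so that $D_{\eps_0}(K)\subset V$ and $\partial V\cap D_{\eps_0}(K)=D_{\eps_0}(\partial K)$ as in the proof of Theorem~\ref{2}) and then invoking the theorem; with that adjustment everything goes through.
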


\begin{corollary}  In the case $\mbox{dim}_{\mathbb{C}}X = 2,$ the unstable submanifold of $p$ in $(W_\eps, \tilde{\lambda}_\eps, \tilde{\phi}_\eps)$ is the thimble $D_\eps(p).$ The intersection of $D_\eps(p)$ with the divisor $D_\eps$ is an embedded Lagrangian circle $L_0$. 

Moreover, one can choose a boundary $Y$ of $(W_\eps, \tilde{\lambda}_\eps, \tilde{\phi}_\eps)$ (not necessarily a level set of $\tilde{\phi}_\eps$, but the vector field $\tilde{Z}_\eps$ is outwardly transversal to $Y$) so that the contact manifold $(Y, \tilde{\lambda}_\eps)$ is foliated by Reeb orbits which form a circle bundle over $D_\eps$. The intersection $\Gamma := D_\eps(p) \cap Y$ is a Legendrian knot, a lift of a Lagrangian $L_1$ in $D_\eps$, which is Hamiltonian isotopic to $L_0$, to the unit circle normal bundle of $D_\eps$ in $X$.
\end{corollary}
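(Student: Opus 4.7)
The first assertion is immediate from Theorem \ref{2} with $K=\{p\}$, giving $\cU_p = D_\eps(p)$. For the Lagrangian circle $L_0 = D_\eps(p) \cap D_\eps$, I would work in the local Lefschetz-type model $s_\eps = (z_0 z_1 + \eps)''1''$, $\pi = z_0 z_1$ used in Theorem \ref{1}: the characteristic foliation defining $D_\eps(p)$ is the standard Lefschetz foliation of $\pi$, whose leaves terminate at a vanishing cycle in $D_\eps$; in complex dimension $2$ this vanishing cycle is a smoothly embedded circle, automatically isotropic because it has real dimension $1$ inside the real $2$-dimensional symplectic surface $D_\eps$.

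For the contact boundary $Y$: I would fix a tubular neighborhood $U$ of $D_\eps$ disjoint from $\mbox{Crit}(\tilde{\phi}_\eps)$ and from the base locus $B$, and choose $Y$ to agree with a standard sublevel boundary $\{\tilde{\phi}_\eps = C\}$ outside $U$ and with the level set $\{\|s_\eps\| = c\}$ inside $U$, for a sufficiently small $c$. Under the symplectic identification of $U$ with a disc bundle in the normal bundle of $D_\eps$ via the holomorphic-section trivialization, $\tilde{\lambda}_\eps$ coincides to leading order with the Chern connection $1$-form of the unit circle bundle, whose Reeb field on $\{\|s_\eps\| = c\}$ rotates the $S^1$-fibers. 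This exhibits $Y \cap U$ as a prequantization circle bundle $\pi_0 : Y \cap U \ra D_\eps$ satisfying $d\tilde{\lambda}_\eps|_{Y \cap U} = \pi_0^*\, \omega|_{D_\eps}$.

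Since $D_\eps(p)$ is the unstable submanifold of $p$ under $\tilde{Z}_\eps$, the Liouville field $\tilde{Z}_\eps$ is tangent to $D_\eps(p)$, and the identity $i_{\tilde{Z}_\eps}\omega = \tilde{\lambda}_\eps$ together with Lagrangianity forces $\tilde{\lambda}_\eps|_{D_\eps(p)} = 0$. Hence $\Gamma = D_\eps(p) \cap Y$ is Legendrian in $(Y, \tilde{\lambda}_\eps|_Y)$, and $L_1 := \pi_0(\Gamma)$ is a smooth circle in $D_\eps$. To exhibit the Hamiltonian isotopy from $L_0$ to $L_1$, I would vary the radius $c$ continuously from $0$ to its final value, producing a family of Legendrians $\Gamma_s = D_\eps(p) \cap \{\|s_\eps\| = s\}$ and projected circles $L_s := \pi_0(\Gamma_s)$ with $L_0$ and $L_1$ at the two endpoints. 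The main obstacle is upgrading this smooth isotopy to a Hamiltonian one; I would handle this via the prequantization identity of the previous paragraph, by which a Legendrian isotopy in an $S^1$-bundle with $d\alpha = \pi_0^*\omega$ projects to an exact Lagrangian isotopy on the base, while $\tilde{\lambda}_\eps|_{D_\eps(p)} = 0$ applied to the annular truncations $D_\eps(p) \cap \{s' \leq \|s_\eps\| \leq s\}$ shows that the flux class of $\{L_s\}$ vanishes at each time, so the isotopy is Hamiltonian in $D_\eps$ as claimed.
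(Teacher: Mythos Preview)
Your treatment of the first paragraph (the thimble as unstable manifold, $L_0$ as a vanishing cycle) is fine and matches the paper. The construction of $Y$ and the Hamiltonian isotopy, however, diverge from the paper and contain a real gap.

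\textbf{On the choice of $Y$ and transversality.} You take $Y \cap U = \{\|s_\eps\| = c\}$. The paper instead applies the Symplectic Neighborhood Theorem to $D_\eps$, takes an abstract connection $1$-form $\alpha$ on $\mathcal{N}D_\eps$ with curvature $\omega|_{D_\eps}$, and lets $Y$ be the radius-$r_0$ circle bundle. The point is that $\tilde{Z}_\eps$ was modified away from $Z_\eps$ inside the region $V_\delta$ (where $\tilde{\lambda}_\eps$ interpolates between $\lambda_\eps$ and $\xi_\eps$), and the whole thimble $D_\eps(p)$ sits in $V_\delta$. So knowing that $Z_\eps$ is transverse to $\{\|s_\eps\|=c\}$ is not enough; you must check transversality of the \emph{modified} field $\tilde{Z}_\eps$. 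The paper handles this by reducing to the field $X_\eps$ and invoking Lemma~\ref{lefschetz}: since $X_\eps$ lies along the characteristic foliation of $\pi$, $d\pi(X_\eps)$ is radial in $\mathbb{C}$ and hence transverse to the images of the circle fibres for $r_0$ small. Your sketch omits this step.

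\textbf{On the Hamiltonian isotopy.} This is where the proposal breaks down. Both of your mechanisms---``Legendrian isotopy in a prequantization bundle projects to an exact isotopy'' and ``$\tilde{\lambda}_\eps|_{D_\eps(p)}=0$ on annular truncations forces zero flux''---ultimately compute the flux of $\{L_s\}$ in $D_\eps$ as $\int_{\Gamma_s}\alpha - \int_{\Gamma_{s'}}\alpha$ for the connection form $\alpha$. You only know $\tilde{\lambda}_\eps \approx \alpha$ \emph{to leading order}, so $\int_{\Gamma_s}\alpha$ is only approximately, not exactly, zero; the vanishing of $\tilde{\lambda}_\eps$ on $D_\eps(p)$ controls $\omega$ on the annulus in $X$, not $\pi_0^*(\omega|_{D_\eps})$, and these differ by the vertical term $-r\,dr\wedge\alpha$. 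The paper circumvents this entirely: for each $t\in[0,1]$ it flows $\Gamma$ along the interpolated field $(1-t)\tilde{Z}_\eps + t\,d\Phi\bigl(\tfrac{-1}{r(1-\frac{1}{2}r^2)}\partial_r\bigr)$ to obtain $L_t\subset D_\eps$ and a Lagrangian cylinder $C_t$ with $\partial C_t=\Gamma\cup L_t$. Capping the trace $\tilde\theta([0,t]\times S^1)$ with $C_0$ and $C_t$ yields a closed torus $\mathcal{C}$ lying in $X\setminus D_0$, where $\omega$ is \emph{exact}; hence $\int_{\mathcal{C}}\omega=0$, and since $C_0,C_t$ are Lagrangian the trace integral vanishes exactly. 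This exactness-in-the-complement argument is the missing idea in your approach.
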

\begin{proof} The first half of the statement is obvious from the construction. Consider the normal bundle $\tilde{\pi}: \mathcal{N}D_\eps \ra D_\eps$ and equip it a connection $1-$form $\alpha$ so that the curvature of $\alpha$ is $\omega$ in $D_\eps$. Notice that $\alpha$ is a 1-form on $\mathcal{N}D_\eps \bsh D_\eps$, invariant under radial scaling and $d\alpha$ extends and equals $\omega$ in $D_\eps$. Symplectic Neighborhood Theorem yields a symplectomorphism $\Phi$ from a neighborhood of the zero section of $\mathcal{N}D_\eps$ (equipped with symplectic form $\Omega = d((1  - \frac{1}{2}r^2)\alpha)$) to a neighborhood of $D_\eps$ in $X$ such that $d\Phi = \mbox{Id}$ on $T(\mathcal{N}D_\eps)|_{D_\eps}$. Let $Y$ be the image of the circle bundle of radius $r_0$ under $\Phi$. 

Outside $V_\delta$ (where the Weinstein homotopy stays constant), it is well-known that $Z_\eps$ is outwardly transversal to $Y$. Inside $V_\delta$, from the construction, it is sufficient to check that the vector field $X_\eps$ is outwardly transversal to $Y$. Recall that $X_\eps$ lies in the $\omega$-orthogonal connection induced by the holomorphic map $\pi = s_0/h$. The vector field $d\pi(X_\eps)$ is perpendicular to circles with center $\eps$ in $\mathbb{C}$. The property of $\Phi$ implies that when $r_0$ is sufficiently small, $d\pi(X_\eps)$ is transversal to the image of each fiber of $Y$ under $\pi.$ Hence $X_\eps$ is outwardly transversal to $Y$ and consequently $\tilde{Z}_\eps$ is outwardly transversal to $Y$. 

Since $D_\eps(p)$ is Lagrangian and $\tilde{Z}_\eps$ is transversal to $Y$, the intersection $\Gamma = D_\eps(p) \cap Y$ is a Legendrian knot. This Legendrian intersects transversally each circle fiber at at most one point because the image $\pi(D_\eps(p)) \subset \mathbb{C}$ is the segment $[0, \eps]$. Flowing $\Gamma$ along $d\Phi(\frac{-1}{r(1-\frac{1}{2}r^2)}\pa/ \pa r)$ gives an embedded Lagrangian circle $L_1$. Similarly, for each $t \in [0,1]$, flowing $\Gamma$ along the vector field $(1-t)\tilde{Z}_\eps + td\Phi(\frac{-1}{r(1-\frac{1}{2}r^2)}\pa/ \pa r)$ gives an embedded Lagrangian circle $L_t$. 

We claim that $\{L_t\}_{0 \leq t \leq 1}$ are all Hamiltonian isotopic. Choose a bijective parametrization $\theta: S^1 \ra \Gamma$, we then get a parametrization $\theta_t: S^1 \ra L_t$. Denote by $\tilde{\theta}: [0,1] \times S^1 \ra D_\eps,$ $\tilde{\theta}(t, .) := \theta_t$. To show $\{L_t\}_{\{0 \leq t \leq 1\}}$ are all Hamiltonian isotopic, it is sufficient to check $$\int_{[0,t] \times S^1} \tilde{\theta}^*\omega = 0. $$ Let $C_t$ be the Lagrangian cylinder obtained from flowing $\Gamma$ along the vector field $(1-t)\tilde{Z}_\eps + td\Phi(\frac{-1}{r(1-\frac{1}{2}r^2)}\pa/ \pa r)$ (in particular, $\pa C_t = \Gamma \cup L_t$). We cap $\tilde{\theta}([0,t] \times S^1)$ with $C_0$ and $C_t$ to obtain a (torus) cycle $\mathcal{C}$ in $X$. In fact $\mathcal{C}$ lies in the complement of $D_0$, where $\omega$ is exact. It follows that $\int_{\mathcal{C}} \omega = 0$ and thus $\int_{[0,t] \times S^1} \tilde{\theta}^*\omega = 0$ holds. Therefore, $L_1$ is Hamiltonian isotopic to $L_0$.    
\end{proof}

\end{document}